 \newtheorem{lemma}{Lemma}
 \newtheorem{theorem}{Theorem}
 \newtheorem{proposition}{Proposition}
 \newtheorem{corollary}{Corollary}
\newtheorem{example}{Example}
\begin{document}

\title{ON COMPLEX MULTIPLICATIVE INTEGRATION}

\author{Agamirza Bashirov}
\address{Department of Mathematics, Eastern Mediterranean University, Gazimagusa, North Cyprus}
\email{agamirza.bashirov@emu.edu.tr},
        
\author{Mustafa Riza}
\address{Department of Mathematics, Eastern Mediterranean University, Gazimagusa, North Cyprus}
\email{mustafa.riza@emu.edu.tr}

\subjclass[2000]{ 30E20}

\keywords{ Complex calculus \*\ complex integral \*\ multiplicative calculus \*\ fundamental theorem of calculus.}

\date{\today}

\begin{abstract}In the present paper we extend the multiplicative integral to complex-valued functions of complex variable. The main difficulty in this way, that is the multi-valued nature of the complex logarithm, is avoided by division of the interval of integration to a finite number of local intervals, in each of which the complex logarithm can be localized in one of its branches. Interestingly, the complex multiplicative integral became a multi-valued function. Some basic properties of this integral are considered. In particular, it is proved that this integral and the complex multiplicative derivative bonded in a kind of fundamental theorem.
\end{abstract}

\maketitle


\section{Introduction}

Earlier in 1938 Volterra and Hostinski \cite{V} invented the bigeometric calculus. This invention was rediscovered later in 1972 by Grossman and Katz  \cite{GK}, who proposed two significant alternative calculi to the classical calculus of Newton and Leibnitz, namely the multiplicative and bigeometric calculi.  These pioneering works initiated numerous studies on multiplicative and bigeometric calculus. In the literature sometimes bigeometric calculus is also referred as Volterra, proportional, or product calculus. Bigeometric calculus was pushed forward by the contributions of Grossman  \cite{G}, Cordova-Lepe \cite{C1, C2}, Slavik \cite{SL} etc. Furthermore elements of stochastic integration of biometric nature are introduced in Karandikar  \cite{K}  and Daletskii and Teterina \cite{DT}. On the other hand, multiplicative calculus and its applications were promoted in Bashirov et al. \cite{B2, B3} and Stanley \cite{ST}. Moreover, Riza et al.  \cite{ROK} and M\i{}s\i{}rl\i{} and G\"{u}refe \cite{MG} used multiplicative calculus for advancement of numerical methods; Florack and van Assen  \cite{FA} applied multiplicative calculus to biomedical image analysis, and Bashirov and Bashirova \cite{B1}  used multiplicative calculus for the derivation of a mathematical model of literary texts etc.

The basic difference between different versions of calculus is that they present calculus with reference to different basic functions. In the case of Newtonian calculus, this reference function is linear. Therefore, the statements and proofs of facts, which are not perfectly described in terms of linear functions become complicated. For example, Newtonian calculus is suitable for Taylor series, but not for Fourier series. At the same time, the reference function of multiplicative calculus is exponential. This makes the study of exponent related problems (for example, growth) suitable in multiplicative calculus. In particular, complex Fourier series, expressed in terms of exponent, are seen to be suitable for multiplicative calculus.

Nevertheless, the capacity of real multiplicative calculus is restricted within the class of positive functions of real variable and, hence, does not accept sine and cosine functions. Therefore, studying Fourier series by means of real multiplicative calculus is not possible. This suggests the creation of complex multiplicative calculus.

A proper complex multiplicative differentiation was prompted in Bashirov and Riza  \cite{B4}, where it was also demonstrated that the complex multiplicative differentiation accepts the functions with positive and as well as negative values whenever they are nowhere-vanishing. This point is unlike to real multiplicative differentiation and very important since the terms of complex Fourier series are also nowhere-vanishing. Continuing this study with the aim of further application to Fourier series, in this paper we deal with complex multiplicative integration by taking into consideration all branches of the complex logarithm.

Many books on complex analysis and calculus are available.  We refer to Ahlfors \cite {AH}, Greene and Krantz \cite{GR} and Sarason \cite{S}, which are used during this study.

The paper is organized in the following way. In Section 2, we briefly review basic points of multiplicative differentiation and line multiplicative integrals. The basic difficulty for a proper definition of the complex multiplicative integral is a multi-valued nature of the complex logarithm. In Section 3 complex multiplicative integral is defined locally, that allows to work with only one branch of the complex logarithm. Next, in Section 4 the multiplicative complex integral is defined in general form. Finally, in Section 5 we study the properties of complex multiplicative integral.

One major remark about the notation is that the multiplicative versions of the concepts of ordinary calculus are called as *concepts, for example, a *derivative means a multiplicative derivative. We denote by $\mathbb{R}$ and $\mathbb{C}$ the fields of real and complex numbers, respectively. $\mathrm{Arg}\,z$  is the principal value of $\arg z$, noticing that $-\pi <\mathrm{Arg}\,z\le \pi$. Always $\ln x$ refers to the natural logarithm of the real number $x>0$ whereas $\log z$ to the same of the complex number  $z\not= 0$. By $\mathrm{Log}\,z$, we denote the value at  $z$ of the principal branch of the complex logarithm, i.e., $\mathrm{Log}\,z=\ln |z|+i\mathrm{Arg}\,z$, where $i$ denotes the imaginary unit and $|z|$ the modulus of $z$.


\section{Preliminaries}

The *derivative $f^*(x)$ of a purely positive or purely negative differentiable function $f$ of a real variable is defined as the limit
 \begin{equation}
 \label{1}
 f^*(x)=\lim _{h\to 0}(f(x+h)/f(x))^{1/h},
 \end{equation}
showing how many times $|f(x)|$ changes at $x$. It differs from the derivative $f'(x)$, which shows to the rate of change of $f$ at $x$. These two derivative concepts are related to each other by the formula
 \begin{equation}
 \label{2}
 f^*(x)=e^{(\ln |f(x)|)'}=e^{\frac{f(x)'}{f(x)}}.
 \end{equation}
The appropriateness of the *derivative, especially, in modeling growth related processes has been demonstrated in various papers, for example, \cite{B1, B2, B3, FA, MG, ROK}.

One can observe that the limit in (\ref{1}) can not be applied to differentiable functions with values changing the sign. The reason is that such a function certainly has zeros. This lack of integrity is removed by complex *derivative.

Following to Bashirov and Riza \cite{B4}, let $f$ be nowhere-vanishing differentiable complex function on an open set $D$ in $\mathbb{C}$. To extend formulae (\ref{1})--(\ref{2}) to the complex case, note that in general a branch of $\log f$ may not exist. Even if it exist, it can not be represented as a composition of a branch of $\log $-function and $f$. These rigors can be avoided locally since for a sufficiently small neighborhood $U\subseteq D$ of the point $z\in D$, the branches of $\log f$ on $U$ exist, they are composition of branches of the $\log $-function and the restriction of $f$ to $U$, and the $\log $-differentiation formula $(\log f)'=f'/f$ is valid for $\log f$ on $U$ (see Sarason \cite{S}). Taking this into consideration, the *derivative of $f$ can be defined just as
 \begin{equation}
 \label{3}
 f^*(z)=e^{(\ln f(z))'}=e^{\frac{f(z)'}{f(z)}},
 \end{equation}
noticing that it is independent of the branches of $\log $-function. In \cite{B4}, it is proved that if $z=x+iy$ and
 \begin{equation}
 \label{4}
 R(z)=R(x,y)=|f(z)|\ \ \text{and}\ \ \Theta (z)=\Theta (x,y)=\mathrm{Arg } f(z),
 \end{equation}
then
 \begin{equation}
 \label{5}
 \left\{ \begin{array}{l}
 |f^*(z)|=R^*_x(z)=[e^{\Theta }]^*_y(z),\\
 \arg f^*(z)=\Theta '_x(z)+2\pi n=-[\ln R]'_y(z)+2\pi n,\ n=0,\pm 1,\pm 2,\ldots ,
 \end{array}\right.
 \end{equation}
where $\Theta '_x$ and $[\ln R]'_y$ are partial derivatives as well as $R^*_x$ and $[e^{\Theta }]^*_y$ are partial *derivatives of the real-valued functions of two real variables.

We say that a complex-valued function $f$ of complex variable is \textit{*differentiable} at $z\in \mathbb{C}$ if it is differentiable at $z$ and $f(z)\not= 0$. We also say that $f$ is \textit{*holomorphic} or \textit{*analytic} on an open connected set $D$ if $f^*(z)$ exists for every $z\in D$.

The following examples demonstrate some features of complex *differentiation.
 \begin{example}
 \label{E1}
{\rm The function $f(z)=e^{cz}$, $z\in \mathbb{C}$, where $c=\mathrm{const}\in \mathbb{C}$, is an entire function and
its *derivative
 \[
 f^*(z)=e^{f'(z)/f(z)}=e^{ce^{cz}/e^{cz}}=e^c,\ z\in \mathbb{C},
 \]
is again an entire function, taking identically the nonzero value $e^c$. Thus, in complex *calculus $f(z)=e^{cz}$ plays the role of the linear function $g(z)=az$ with $a=e^c$ from Newtonian calculus.}
 \end{example}
 \begin{example}
 \label{E2}
{\rm For another entire function $f(z)=e^{ce^z}$, $z\in \mathbb{C}$, with $c=\mathrm{const}\in \mathbb{C}$, we have
 \[
 f^*(z)=e^{f'(z)/f(z)}=e^{ce^ze^{ce^z}/e^{ce^z}}=e^{ce^z},\ z\in \mathbb{C}.
 \]
Hence, $f$ is a solution of the equation $f^*=f$. Thus, in complex *calculus $f(z)=e^{ce^z}$ plays the role of the exponential function $g(z)=ce^z$ from ordinary calculus.}
 \end{example}
 \begin{example}
 \label{E3}
{\rm The function $f(z)=z$, $z\in \mathbb{C}$, is also entire, but its *derivative
 \[
 f^*(z)=e^{f'(z)/f(z)}=e^{1/z},\ z\in \mathbb{C}\setminus \{ 0\},
 \]
accounts an essential singularity at $z=0$. This is because *differentiation is applicable to functions with the range in $\mathbb{C}\setminus \{ 0\} $. Thus, the *derivative of an entire function may not be entire.}
 \end{example}

In order to develop complex *integration, we need also in line *integrals as well as a fundamental theorem of calculus for them. Following to Bashirov \cite{B}, let $f$ be a positive function of two variables, defined on an open connected set in $\mathbb{R}^2$, and let $C$ be a piecewise smooth curve in the domain of $f$. Take a partition $\mathcal{P}=\{ P_0,\ldots ,P_m\} $ on $C$ and let $(\xi _k, \eta _k)$ be a point on $C$ between $P_{k-1}$ and $P_k$. Denote by $\Delta s_k$ the arclength of $C$ from the point $P_{k-1}$ to $P_k$. Define the integral product
 \[
 P(f,\mathcal{P})=\prod _{k=1}^mf(\xi _k,\eta _k)^{\Delta s_k}.
 \]
The limit of this product when $\max \{ \Delta s_1,\ldots ,\Delta s_m\} \to 0$ independently on selection of the points $(\xi _k,\eta _k)$ will be called a \emph{line *integral of $f$ in $ds$ along $C$}, for which we will use the symbol
 \[
 \int _Cf(x,y)^{ds}.
 \]
The line *integral of $f$ along $C$ exist if $f$ is a positive function and the line integral of $\ln f$ along $C$ exists, and they are related as
 \[
 \int _Cf(x,y)^{ds}=e^{\int _C\ln f(x,y)\,ds}.
 \]

In a similar way, the line *integrals in $dx$ and in $dy$ can be defined and their relation to the respective line integrals can be established in the form
 \begin{equation}
 \label{6}
 \int _Cf(x,y)^{dx}=e^{\int _C\ln f(x,y)\,dx}\ \ \text{and}\ \  \int _Cf(x,y)^{dy}=e^{\int _C\ln f(x,y)\,dy}.
 \end{equation}
Clearly, all three kinds of line *integrals exist if $f$ is a positive continuous function. It is also suitable to denote
 \[
 \int _Cf(x,y)^{dx}g(x,y)^{dy}=\int _Cf(x,y)^{dx}\cdot \int _Cg(x,y)^{dy}.
 \]
In cases when $C$ is a closed curve we write $\oint _C$ instead of $\int _C$.
 \begin{example}
 \label{E4}
{\rm Let $c>0$ and let $C=\{ (x(t),y(t)):a\le t\le b\} $ be a piecewise smooth curve. Then
 \[
 \int _Cc^{dx}=e^{\int _C\ln c\,dx}=e^{(x(b)-x(a))\ln c}=c^{x(b)-x(a)}.
 \] }
 \end{example}

 \begin{theorem}[\textit{Fundamental theorem of calculus for line *integrals}]
 \label{T1}
Let $D\subseteq \mathbb{R}^2$ be an open connected set and let $C=\{ (x(t),y(t)):a\le t\le b\} $ be a piecewise smooth
curve in $D$. Assume that $f$ is a continuously differentiable positive function on $D$. Then
 \[
 \int _Cf^*_x(x,y)^{dx}f^*_y(x,y)^{dy}=\frac{f(x(b),y(b))}{f(x(a),y(a))}.
 \]
 \end{theorem}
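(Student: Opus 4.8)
The plan is to reduce the multiplicative statement to the classical gradient theorem (the fundamental theorem for ordinary line integrals) by passing to logarithms. First I would use the product convention to write the left-hand side as
\[
\int _Cf^*_x(x,y)^{dx}f^*_y(x,y)^{dy}=\int _Cf^*_x(x,y)^{dx}\cdot \int _Cf^*_y(x,y)^{dy}.
\]
Since $f$ is positive and continuously differentiable, $\ln f$ is continuously differentiable on $D$, and the definition of the partial *derivatives gives $f^*_x=e^{(\ln f)_x}$ and $f^*_y=e^{(\ln f)_y}$; in particular $\ln f^*_x=(\ln f)_x$ and $\ln f^*_y=(\ln f)_y$ are continuous. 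Applying the relations (\ref{6}) to each factor then yields
\[
\int _Cf^*_x(x,y)^{dx}\cdot \int _Cf^*_y(x,y)^{dy}=e^{\int _C(\ln f)_x\,dx}\cdot e^{\int _C(\ln f)_y\,dy}.
\]

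Next I would merge the two exponentials into a single exponential of one ordinary line integral,
\[
e^{\int _C(\ln f)_x\,dx}\cdot e^{\int _C(\ln f)_y\,dy}=e^{\int _C[(\ln f)_x\,dx+(\ln f)_y\,dy]},
\]
and observe that the integrand is exactly the total differential of $\ln f$ along $C$. At this point the classical fundamental theorem for line integrals applies, because $\ln f$ is a continuously differentiable potential for the gradient field $((\ln f)_x,(\ln f)_y)$ on the piecewise smooth curve $C$, giving
\[
\int _C[(\ln f)_x\,dx+(\ln f)_y\,dy]=\ln f(x(b),y(b))-\ln f(x(a),y(a)).
\]
Exponentiating and using $e^{\ln \alpha -\ln \beta }=\alpha /\beta $ produces the claimed ratio $f(x(b),y(b))/f(x(a),y(a))$.

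I do not expect a genuine obstacle here; once the logarithmic reduction is in place the argument is essentially bookkeeping. The only points deserving care are the justification of the identities $\ln f^*_x=(\ln f)_x$ and $\ln f^*_y=(\ln f)_y$ directly from the *derivative definition (\ref{3}), and the verification that the hypotheses of the classical gradient theorem are met, namely that $\ln f$ is $C^1$ (which follows from $f$ being positive and $C^1$) and that $C$ is piecewise smooth (which is assumed). Since $D$ need not be simply connected, I would stress that the argument does not rely on path-independence in general: it uses only that this particular integrand is the exact differential of the globally defined function $\ln f$, so that the endpoints alone determine the value of the integral.
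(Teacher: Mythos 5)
Your argument is correct: the identities $\ln f^*_x=(\ln f)_x$ and $\ln f^*_y=(\ln f)_y$, the passage through (\ref{6}), and the application of the classical gradient theorem to the exact differential of the globally defined potential $\ln f$ are exactly the right ingredients, and your remark that simple connectedness is not needed is well taken. The paper itself does not prove Theorem \ref{T1} but merely cites Bashirov \cite{B}; your logarithmic reduction is the standard argument one would expect to find there, so there is nothing to correct.
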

 \begin{proof}
See Bashirov \cite{B}.
 \end{proof}


 \section{Complex multiplicative integration (local)}

Let $f$ be a continuous nowhere-vanishing complex-valued function of complex variable and let $z(t)=x(t)+iy(t)$, $a\le
t\le b$, be a complex-valued function of real variable, tracing a piecewise smooth simple curve $C$ in the open
connected domain $D$ of $f$. The complex *integral of $f$ along $C$ will heavily use $\log f$. In order to represent
$\log f$ as the composition of branches of $\log $ and $f$ along the whole curve $C$ we will use a ``method of
localization" from Sarason \cite{S}. In this section we will consider a simple case assuming that the length of the
interval $[a,b]$ is sufficiently small so that all the values of $f(z(t))$ for $a\le t\le b$ fall into an open half
plane bounded by a line through the origin. Under this condition the restriction of $\log f$ to $C$ can be treated as a
composition of the branches of $\log $ and the restriction of $f$ to $C$. Moreover, we can select one of the
multi-values of $\log f(z(a))$ and consider a branch $\mathcal{L}$ of $\log $ so that $\mathcal{L}(f(z(a)))$ equals to
this preassigned value. Thus
 \[
 \log f(z(t))=\mathcal{L}(f(z(t)))+2\pi ni,\ a\le t\le b,\ n=0,\pm 1,\pm 2,\ldots .
 \]

Now, take a partition $\mathcal{P}=\{ z_0,\ldots ,z_m\} $ on $C$ and let $\zeta _k$ be a point on $C$ between $z_{k-1}$
and $z_k$. Denote $\Delta z_k=z_k-z_{k-1}$. Consider the integral product $\prod _{k=1}^me^{\Delta z_k\log f(\zeta
_k)}$. It can be evaluated as
 \begin{align*}
 \prod _{k=1}^me^{\Delta z_k\log f(\zeta _k)}
  &=e^{\sum _{k=1}^m(\mathcal{L}(f(\zeta _k))+2\pi ni)\Delta z_k}\\
  &=e^{2\pi n(z(b)-z(a))i}e^{\sum _{k=1}^m\mathcal{L}(f(\zeta _k))\Delta z_k},\ n=0,\pm 1,\pm 2,\ldots ,
 \end{align*}
showing that $\prod _{k=1}^me^{\Delta z_k\log f(\zeta _k)}$ has more than one value. Let
 \begin{equation}
 \label{7}
 P_0(f,\mathcal{P})=e^{\sum _{k=1}^m\mathcal{L}(f(\zeta _k))\Delta z_k}
 \end{equation}
and
 \begin{equation}
 \label{8}
 P_n(f,\mathcal{P})=e^{2\pi n(z(b)-z(a))i}P_0(f,\mathcal{P}),\ n=0,\pm 1,\pm 2,\ldots .
 \end{equation}
The limit of $P_0(f,\mathcal{P})$ as $\max \{ |\Delta z_1|,\ldots ,|\Delta z_m|\} \to 0$ independently on selection of
the points $\zeta _k$ will be called a \emph{branch value of the complex *integral of $f$ along $C$} and it will be
denoted by $I^*_0(f,C)$. Then \emph{the complex *integral of $f$ along $C$} can be defined as the multiple values
 \begin{equation}
 \label{9}
 I^*_n(f,C)=e^{2\pi n(z(b)-z(a))i}I^*_0(f,C),\ n=0,\pm 1,\pm 2,\ldots ,
 \end{equation}
which will be denoted by
 \[
 \int _Cf(z)^{dz}.
 \]
Note that if $z(b)-z(a)$ is an integer, then all the values of $\int _Cf(z)^{dz}$ equal to $I^*_0(f,C)$, i.e.,
$I^*(f,C)$ become single-valued. If $z(b)-z(a)$ is a rational number in the form $p/q$, where $p$ and $q$ are
irreducible integers with $q>0$, then $\int _Cf(z)^{dz}$ has $q$ distinct values
 \[
 e^{2\pi npi/q}I^*_0(f,C),\ n=0,1,\ldots ,q-1 .
 \]
Generally, $\int _Cf(z)^{dz}$ has countably many distinct values. In case if $z(b)-z(a)$ is a real number, we also have
$|I^*_n(f,C)|=|I^*_0(f,C)|$ for all $n$. Similarly, if $z(b)-z(a)$ is an imaginary number, $\mathrm{Arg}\,
I^*_n(f,C)=\mathrm{Arg}\, I^*_0(f,C)$ for all $n$.

The existence of the complex *integral of $f$ can be reduced to the existence of line *integrals in the following way.
Let $R(z)=|f(z)|$ and $\Theta (z)=\mathrm{Im}\,\mathcal{L}(f(z))$ for $z\in C$. Denote $z=x+iy$ and $\Delta z_k=\Delta
x_k+i\Delta y_k$. Then from (\ref{7}),
 \begin{align*}
 P_0(f,\mathcal{P})
  &=e^{\sum _{k=1}^m\mathcal{L}(f(\zeta _k))\Delta z_k}\\
  &=e^{\sum _{k=1}^m(\ln R(\zeta _k)+i\Theta (\zeta _k))(\Delta x_k+i\Delta y_k)}\\
  &=e^{\sum _{k=1}^m(\ln R(\zeta _k)\Delta x_k-\Theta (\zeta _k)\Delta y_k)+i\sum _{k=1}^m(\Theta (\zeta _k)\Delta x_k+\ln R(\zeta _k)\Delta y_k)}.
 \end{align*}
If the limits of the sums in the last expression exist, then they are line integrals, producing
 \begin{equation}
 \label{10}
 I^*_0(f,C)=e^{\int _C(\ln R(z)\,dx-\Theta (z)\,dy)+i\int _C(\Theta(z)\,dx+\ln R(z)\,dy)}.
 \end{equation}
Additionally,
 \[
 e^{2\pi n(z(b)-z(a))i}=e^{2\pi n(-(y(b)-y(a))+i(x(b)-x(a)))}=e^{-\int _C2\pi n\,dy+i\int _C2\pi n\,dx}.
 \]
By (\ref{9})--(\ref{10}), this implies
 \begin{equation}
 \label{11}
 I^*_n(f,C)=e^{\int _C(\ln R(z)\,dx-(\Theta (z)+2\pi n)\,dy)+i\int _C((\Theta(z)+2\pi n)\,dx+\ln R(z)\,dy)}
 \end{equation}
for $n=0,\pm 1,\pm 2,\ldots $ or, in the multi-valued form,
 \[
 \int _Cf(z)^{dz}=e^{\int _C\log f(z)\, dz},
 \]
in which
 \begin{equation}
 \label{12}
 I^*_0(f,C)=e^{\int _C\mathcal{L}(f(z))\,dz}\ \ \text{and}\ \ I^*_n(f,C)=e^{2\pi n(z(b)-z(a))i}I^*_0(f,C).
 \end{equation}

To write (\ref{11}) in terms of line *integrals, note that by (\ref{6})
 \[
 |I^*_n(f,C)| =e^{\int _C(\ln R(z)\,dx-(\Theta (z)+2\pi n)\,dy)}=\int _CR(z)^{dx}\big( e^{-\Theta (z)-2\pi n}\big) ^{dy}
 \]
and
 \begin{align*}
 \arg I^*_n(f,C)
  & =\int _C((\Theta(z)+2\pi n)\,dx+\ln R(z)\,dy)+2\pi m\\
  & =\ln \int _C\big( e^{\Theta (z)+2\pi n}\big) ^{dx}R(z)^{dy}+2\pi m.
 \end{align*}
Hence,
 \begin{equation}
 \label{13}
 I^*_n(f,C)=\int _CR(z)^{dx}\big( e^{-\Theta (z)-2\pi n}\big) ^{dy}e^{i\ln \int _C\left( e^{\Theta (z)+2\pi n}\right) ^{dx}R(z)^{dy}}
 \end{equation}
for $n=0,\pm 1,\pm 2,\ldots \,$. Thus, the conditions, imposed at the beginning of this section, namely, (a) $f$ is
nowhere-vanishing and continuous on the open connected set $D$, (b) $C$ is piecewise smooth and simple curve in $D$,
and (c) $\{ f(z(t)):a\le t \le b\} $ falls into an open half plane bounded by a line through the origin, guarantee the
existence of $\int _Cf(z)^{dz}$ as multiple values.

The following proposition will be used for justifying the correctness of the definition of the complex *integral for
arbitrary interval $[a,b]$ in the next section.
 \begin{proposition}[\textit{1st multiplicative property, local}]
 \label{P1}
Let $f$ be a nowhere-vanishing continuous function, defined on an open connected set $D$, and let $C=\{
z(t)=x(t)+iy(t):a\le t\le b\} $ be a piecewise smooth simple curve in $D$ with the property that the set $\{
f(z(t)):a\le t\le b\} $ falls into an open half plane bounded by a line through origin. Take any $a<c<b$ and let
$C_1=\{ z(t)=x(t)+iy(t):a\le t\le c\} $ and $C_2=\{ z(t)=x(t)+iy(t):c\le t\le b\} $. Then
 \[
 \int _Cf(z)^{dz}=\int _{C_1}f(z)^{dz}\int _{C_2}f(z)^{dz},
 \]
where the equality is understood in the sense that
 \[
 I^*_n(f,C)=I^*_n(f,C_1)I^*_n(f,C_2)\ \ \text{for all}\ \ n=0,\pm 1,\pm 2,\ldots
 \]
with the same branch $\mathcal{L}$ of $\log $ used for $I_0(f,C)$, $I_0(f,C_1)$ and $I_0(f,C_2)$.
 \end{proposition}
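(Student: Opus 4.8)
The plan is to reduce the multi-valued identity to two essentially separate assertions: the $n=0$ (branch-value) equality $I^*_0(f,C)=I^*_0(f,C_1)I^*_0(f,C_2)$, which rests on additivity of the ordinary complex line integral under subdivision of the parameter interval, and the passage from $n=0$ to general $n$, which is a purely algebraic telescoping of the prefactors $e^{2\pi n(z(\cdot)-z(\cdot))i}$ appearing in \eqref{12}. First I would record the trivial but necessary observation that, since the images $\{f(z(t)):a\le t\le c\}$ and $\{f(z(t)):c\le t\le b\}$ are both subsets of $\{f(z(t)):a\le t\le b\}$, they fall into the same open half plane; hence the single branch $\mathcal{L}$ used for $C$ simultaneously serves for $C_1$ and $C_2$, and the branch values $I^*_0(f,C_1)$, $I^*_0(f,C_2)$ are well defined via \eqref{12} with this same $\mathcal{L}$.

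For the branch value I would invoke the representation $I^*_0(f,C)=e^{\int_C\mathcal{L}(f(z))\,dz}$ from \eqref{12}. Because $C$ is parametrized by $z(t)$ on $[a,b]$ and $c$ splits $[a,b]$ into $[a,c]$ and $[c,b]$, additivity of the line integral gives $\int_C\mathcal{L}(f(z))\,dz=\int_{C_1}\mathcal{L}(f(z))\,dz+\int_{C_2}\mathcal{L}(f(z))\,dz$. Applying $e^{A+B}=e^Ae^B$ then yields $I^*_0(f,C)=I^*_0(f,C_1)I^*_0(f,C_2)$ directly.

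For general $n$ I would use \eqref{12} once more, writing $I^*_n(f,C_1)=e^{2\pi n(z(c)-z(a))i}I^*_0(f,C_1)$ and $I^*_n(f,C_2)=e^{2\pi n(z(b)-z(c))i}I^*_0(f,C_2)$, since the endpoints of $C_1$ are $z(a),z(c)$ and those of $C_2$ are $z(c),z(b)$. Multiplying these and using the $n=0$ result,
\[
I^*_n(f,C_1)I^*_n(f,C_2)=e^{2\pi n\big((z(c)-z(a))+(z(b)-z(c))\big)i}I^*_0(f,C)=e^{2\pi n(z(b)-z(a))i}I^*_0(f,C)=I^*_n(f,C),
\]
the intermediate point $z(c)$ cancelling in the exponent. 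This completes the argument for all $n$.

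I do not expect a genuine obstacle here; the only point requiring care is the bookkeeping of branches, namely ensuring that the same $\mathcal{L}$ is fixed throughout so that the three branch values are comparable, which is exactly the hypothesis built into the statement. One could instead argue at the level of the integral products $P_0(f,\mathcal{P})$ of \eqref{7}: refining any partition of $C$ so as to include the point $z(c)$ factors $P_0(f,\mathcal{P})$ as a product over $C_1$ and over $C_2$, and passing to the limit recovers the $n=0$ identity without invoking line-integral additivity as a black box. The telescoping of the prefactors is identical in either route.
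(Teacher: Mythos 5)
Your proposal is correct and follows essentially the same route as the paper: the paper's proof is a one-line appeal to formula \eqref{11} and the additivity of line integrals over the subdivision of $[a,b]$ at $c$, which is exactly the content of your argument (you work from \eqref{12} and telescope the prefactors $e^{2\pi n(z(\cdot)-z(\cdot))i}$ explicitly, whereas \eqref{11} absorbs the $2\pi n$ terms into the line integrals so the telescoping is automatic). Your preliminary remark that the images over $[a,c]$ and $[c,b]$ lie in the same half plane, so a single branch $\mathcal{L}$ serves all three integrals, is a worthwhile detail the paper leaves implicit.
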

 \begin{proof}
This follows immediately from (\ref{11}) and the respective property of line integrals.
 \end{proof}

Next, we consider a *analog of the fundamental theorem of complex calculus in a local form.
 \begin{proposition}[\textit{Fundamental theorem of complex *calculus, local}]
 \label{P2}
Let $f$ be a nowhere-vanishing *holomorphic function, defined on an open connected set $D$, and let $C=\{
z(t)=x(t)+iy(t):a\le t\le b\} $ be a piecewise smooth simple curve in $D$ with the property that the set $\{
f(z(t)):a\le t\le b\} $ falls into an open half plane bounded by a line through origin. Then
 \[
 \int _Cf^*(z)^{dz}=\{ e^{2\pi n(z(b)-z(a))i}f(z(b))/f(z(a)): n=0,\pm 1,\pm 2,\ldots \} .
 \]
 \end{proposition}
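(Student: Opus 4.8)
The plan is to apply the local definition of the complex *integral in the form (\ref{12}) directly to the integrand $f^*$, exploiting the fact that $f^*=e^{f'/f}$ hands us an explicit continuous determination of $\log f^*$ along $C$. Since $f$ is *holomorphic, it is holomorphic and nowhere-vanishing on $D$, so $z\mapsto f'(z)/f(z)$ is continuous (indeed holomorphic) along $C$ and satisfies $e^{f'(z)/f(z)}=f^*(z)$. Hence $f'/f$ is itself one of the continuous branch determinations of $\log f^*$ along $C$, and I would take $\mathcal{L}(f^*(z))=f'(z)/f(z)$ as the determination used to form the branch value $I^*_0(f^*,C)$.

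With this choice, (\ref{12}) gives $I^*_0(f^*,C)=e^{\int _C (f'(z)/f(z))\,dz}$, and the core step is to evaluate this ordinary complex line integral. By the local $\log$-differentiation formula $(\log f)'=f'/f$ recorded in Section~2, and because the half-plane hypothesis supplies a continuous branch of $\log f$ along $C$, the integrand is an exact differential and the fundamental theorem of calculus yields
\[
\int _C\frac{f'(z)}{f(z)}\,dz=\log f(z(b))-\log f(z(a))
\]
for that continuous branch. Exponentiating gives $I^*_0(f^*,C)=e^{\log f(z(b))-\log f(z(a))}=f(z(b))/f(z(a))$, since $e^{\log w}=w$ for any continuous determination of $\log w$.

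Finally I would feed this into the multi-valued definition $I^*_n(f^*,C)=e^{2\pi n(z(b)-z(a))i}I^*_0(f^*,C)$, which at once produces the set $\{e^{2\pi n(z(b)-z(a))i}f(z(b))/f(z(a)):n=0,\pm 1,\pm 2,\ldots\}$ of the statement. The point requiring care is the bookkeeping of branches: had I instead formed $I^*_0(f^*,C)$ from a determination of $\log f^*$ differing from $f'/f$ by a constant $2\pi k i$, the branch value would acquire an extra factor $e^{2\pi k(z(b)-z(a))i}$, and I would then observe that this merely reindexes the family by $n\mapsto n+k$, so that the full multi-valued integral is independent of the choice and reproduces the stated set exactly. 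A secondary subtlety is that the half-plane condition is imposed on $f$ rather than on $f^*$; I would therefore note explicitly that the determination $f'/f$ of $\log f^*$ is continuous along $C$ regardless of the behaviour of $f^*$, which is all that the evaluation of $\int _C(f^*)^{dz}$ actually requires.
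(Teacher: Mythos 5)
Your argument is correct, but it follows a genuinely different route from the paper's. The paper proves Proposition~\ref{P2} by staying inside the real two-variable machinery: it starts from the representation (\ref{13}) of $I^*_n(f^*,C)$ as a product of line *integrals in $dx$ and $dy$, substitutes the expressions (\ref{5}) for $|f^*|$ and $\arg f^*$ in terms of the partial *derivatives of $R=|f|$ and $\Theta=\mathrm{Arg}\,f$, and then invokes Theorem~\ref{T1} (the fundamental theorem for line *integrals) to produce the factor $f(z(b))/f(z(a))$ and Example~\ref{E4} to produce the factor $e^{2\pi n(z(b)-z(a))i}$. You instead work directly with the complex representation (\ref{12}): you observe that $f'/f$ is a continuous determination of $\log f^*$ along $C$, evaluate $\int_C f'(z)/f(z)\,dz=\mathcal{L}(f(z(b)))-\mathcal{L}(f(z(a)))$ using the antiderivative $\mathcal{L}\circ f$ supplied by the half-plane hypothesis, and exponentiate. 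Your route is shorter and makes transparent why the hypothesis is placed on $f$ rather than on $f^*$ (a point the paper leaves implicit), and your closing remarks --- that a different determination of $\log f^*$ merely reindexes the family $n\mapsto n+k$, and that only the continuity of the determination $f'/f$ along $C$ is actually needed --- are exactly the book-keeping required to reconcile your choice with the definition of $I^*_0$. What the paper's longer computation buys is that it runs entirely on the previously established real-variable results (Theorem~\ref{T1} and formula (\ref{5})), so it doubles as a consistency check of the complex *derivative formulas against the line *integral calculus, whereas your proof leans on the standard complex-analytic fact that $f'/f$ integrates to a continuous branch of $\log f$ along the curve.
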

 \begin{proof}
From (\ref{13}),
 \[
 \int _Cf^*(z)^{dz}=\int _C|f^*(z)|^{dx}\big( e^{-\arg f^*(z)}\big) ^{dy}\ e^{i\ln \int _C\left( e^{\arg f^*(z)}\right) ^{dx}|f^*(z)|^{dy}}.
 \]
Using (\ref{5}),
 \begin{align*}
 I_n^*(f^*,C)
  &=\int _CR^*_x(z)^{dx}\big( e^{[\ln R]'_y(z)-2\pi n}\big) ^{dy}\ e^{i\ln \int _C\left( e^{\Theta '_x(z)+2\pi n}\right) ^{dx}
    \left[ e^\Theta \right] ^*_y(z)^{dy}}\\
  &=\int _CR^*_x(z)^{dx}R^*_y(z)^{dy}\ e^{i\ln \int _C\left[ e^\Theta \right] ^*_x(z)^{dx}\left[ e^{\Theta }\right] ^*_y(z)^{dy}}\\
  &\ \ \ \times  \int _C\big( e^{-2\pi n}\big) ^{dy}\ e^{i\ln \int _C(e^{2\pi n})^{dx}}.
 \end{align*}
By Theorem \ref{T1},
 \[
 \int _CR^*_x(z)^{dx}R^*_y(z)^{dy}\ e^{i\ln \int _C\left[ e^\Theta \right] ^*_x(z)^{dx}\left[ e^{\Theta }\right] ^*_y(z)^{dy}}
 =\frac{R(z(b))e^{i\Theta (z(b))}}{R(z(a))e^{i\Theta (z(a))}}=\frac{f(z(b))}{f(z(a))},
 \]
and, by Example \ref{E4},
 \[
 \int _C\big( e^{-2\pi n}\big) ^{dy}\ e^{i\ln \int _C(e^{2\pi n})^{dx}}=e^{2\pi n(-(y(b)+y(a))+i(x(b)-x(a)))}=e^{2\pi n(z(b)-z(a))i}.
 \]
Thus, the proposition is proved.
 \end{proof}


 \section{Complex multiplicative integration (general)}

The following lemma is crucial for a general definition of complex *integral.
 \begin{lemma}
 \label{L1}
Let $f$ be a continuous nowhere-vanishing function, defined on an open connected set $D$, and let $C=\{
z(t)=x(t)+iy(t):a\le t\le b\} $ be a piecewise smooth curve in $D$. Then there exists a partition $\mathcal{P}=\{
t_0,t_1,\ldots ,t_m\} $ of $[a,b]$ such that each of the sets $\{ f(z(t)):t_{k-1}\le t\le t_k\} $, $k=1,\ldots ,m$,
falls into an open half plane bounded by a line through origin.
 \end{lemma}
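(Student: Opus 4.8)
The plan is to reduce the statement to a compactness argument for the composite map $g(t)=f(z(t))$ on $[a,b]$. Since $z$ is continuous (indeed piecewise smooth) and $f$ is continuous and nowhere-vanishing on $D$, the function $g$ is continuous on the compact interval $[a,b]$ and takes values in $\mathbb{C}\setminus\{0\}$. The geometric fact I will exploit is that ``lies in an open half plane bounded by a line through the origin'' is equivalent to the existence of a unit complex number $u$ with $\mathrm{Re}(\bar u\,w)>0$ for every value $w$ of the set; the bounding line is then the line through the origin orthogonal to $u$. So it suffices to split $[a,b]$ so that on each piece a single such $u$ works.

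First I would produce, for each parameter value $t_0\in[a,b]$, a half plane that works locally. Take $u_0=g(t_0)/|g(t_0)|$ and set $H_{t_0}=\{w\in\mathbb{C}:\mathrm{Re}(\bar u_0\,w)>0\}$, so that $g(t_0)\in H_{t_0}$. The real-valued map $t\mapsto \mathrm{Re}(\bar u_0\,g(t))$ is continuous and strictly positive at $t_0$, hence stays positive on an open interval $U_{t_0}\subseteq[a,b]$ about $t_0$; thus $g(U_{t_0})\subseteq H_{t_0}$.

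The family $\{U_{t_0}:t_0\in[a,b]\}$ is then an open cover of the compact interval $[a,b]$. The main (and only delicate) point is passing from a mere cover to an honest partition, since the endpoints of a finite subcover need not align into nested subintervals. I would resolve this with the Lebesgue number lemma: the cover has a Lebesgue number $\delta>0$, so every subinterval of $[a,b]$ of length less than $\delta$ lies in some $U_{t_0}$. Choosing any partition $\mathcal{P}=\{t_0,\ldots,t_m\}$ with mesh less than $\delta$, each $[t_{k-1},t_k]$ sits inside a single $U_{t_j}$, whence $\{f(z(t)):t_{k-1}\le t\le t_k\}=g([t_{k-1},t_k])\subseteq H_{t_j}$, an open half plane bounded by a line through the origin. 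This is exactly the required partition.

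An equivalent and more hands-on route avoids the Lebesgue number lemma. Since $|g|$ attains a positive minimum $m>0$ on $[a,b]$ and $g$ is uniformly continuous there, one may fix $\delta>0$ with $|g(t)-g(s)|<m$ whenever $|t-s|<\delta$. On any subinterval of length less than $\delta$, taking the reference point $s=t_{k-1}$ gives $|g(t)-g(s)|<m\le|g(s)|$, so every $g(t)$ lies in the open disk $D(g(s),|g(s)|)$, which passes through the origin and is contained in the open half plane $\{w:\mathrm{Re}(\overline{g(s)}\,w)>0\}$ bounded by the line tangent there. Either way, the crux is securing a uniform mesh, and both compactness and uniform continuity supply it directly.
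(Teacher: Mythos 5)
Your argument is correct, and its core is the same as the paper's: continuity of $g(t)=f(z(t))$ gives, around each parameter value, an open interval whose image lies in a fixed open half plane through the origin, and compactness of $[a,b]$ reduces this to finitely many data. The one place where you genuinely diverge is the passage from a finite open cover to an actual partition. The paper simply extracts a finite subcover and ``writes the end points of these intervals in increasing order''; taken literally this has a small gap, since a closed subinterval $[t_{k-1},t_k]$ between two consecutive endpoints need not be contained in any single \emph{open} interval of the subcover (e.g.\ the cover of $[0,1]$ by $(-0.1,0.6)$ and $(0.4,1.1)$, where $[0.4,0.6]$ lies in neither). Your use of the Lebesgue number lemma --- or equivalently the uniform-continuity variant with the disk $D(g(s),|g(s)|)$, whose containment in the half plane $\{w:\mathrm{Re}(\overline{g(s)}\,w)>0\}$ you verify correctly --- closes exactly this gap by securing a uniform mesh, so every closed subinterval of a sufficiently fine partition sits inside one covering interval. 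In short: same strategy, but your handling of the cover-to-partition step is the more careful of the two.
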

 \begin{proof}
To every $t\in [a,b]$, consider $\theta _t=\mathrm{Arg}\,f(z(t))$ and the line $L_t$ formed by the rays $\theta =\theta
_t+\pi/2$ and $\theta =\theta _t-\pi/2$. Since $f$ is continuous and nowhere-vanishing, there is an interval
$(t-\varepsilon _t ,t+\varepsilon _t )\subseteq [a,b]$ such that the set
 \[
 \{ f(z(s)):t-\varepsilon _t <s<t+\varepsilon _t \}
 \]
falls into one of the open half planes bounded by $L_t$ if $t\in (a,b)$. In the case of $t=a$ such an interval can be
selected in the form $[a,a+\varepsilon _a )$ and in the case $t=b$ as $(b-\varepsilon _b,b]$. The collection of all such
intervals forms an open cover of the compact subspace $[a,b]$ of $\mathbb{R}$. Therefore, there is a finite number of
them covering $[a,b]$. Writing the end points of these intervals in an increasing order $a=t_0<t_1<\cdots <t_m=b$
produces a required partition.
 \end{proof}

This lemma determines a way for definition of $\int _Cf(z)^{dz}$ in the general case. Assume again that $f$ is a
continuous nowhere-vanishing complex-valued function of complex variable and $z(t)=x(t)+iy(t)$, $a\le t\le b$, is a
complex-valued function of real variable, tracing a piecewise smooth simple curve $C$ in the open connected domain $D$
of $f$. Let $\mathcal{P}=\{ t_0,t_1,\ldots ,t_m\} $ be a partition of $[a,b]$ from Lemma \ref{L1} and let $C_k=\{
z(t):t_{k-1}\le t\le t_k\} $. Choose any branch $\mathcal{L}_1$ of $\log $ and consider $\int _{C_1}f(z)^{dz}$ as
defined in the previous section. Then select a branch $\mathcal{L}_2$ of $\log $ with
$\mathcal{L}_2(f(z(t_1)))=\mathcal{L}_1(f(z(t_1)))$ and consider $\int _{C_2}f(z)^{dz}$. Next, select a branch
$\mathcal{L}_3$ of $\log $ with $\mathcal{L}_3(f(z(t_2)))=\mathcal{L}_2(f(z(t_2)))$ and consider $\int
_{C_3}f(z)^{dz}$, etc. In this process the selection of the starting branch $\mathcal{L}_1$ is free, but the other
branches $\mathcal{L}_2,\ldots ,\mathcal{L}_m$ are selected accordingly to construct a continuous single-valued
function $g$ on $[a,b]$ so that the value of $g$ at fixed $t\in [a,b]$ equals to one of the branch values of $\log
f(z(t))$. Now, following to (\ref{12}), the \emph{complex *integral of $f$ over $C$}, that will again be denoted by
$\int _Cf(z)^{dz}$, can be defined as the multiple values
 \[
 I^*_n(f,C)=\prod _{k=1}^me^{2\pi n(z(t_k)-z(t_{k-1}))i+\int _{C_k}\mathcal{L}_k(f(z))\,dz},\ n=0,\pm 1,\pm 2,\ldots ,
 \]
or
 \begin{equation}
 \label{14}
 I^*_n(f,C)=e^{2\pi n(z(b)-z(a))i}e^{\sum _{k=1}^m\int _{C_k}\mathcal{L}_k(f(z))\,dz},\ n=0,\pm 1,\pm 2,\ldots .
 \end{equation}

This definition is independent on the selection of the partition $\mathcal{P}$ of $[a,b]$. Indeed, if $\mathcal{Q}$ is
another partition, being a refinement of the previous one, then the piece $C_k$ from $z(t_{k-1})$ to $z(t_k)$ of the
curve $C$ became departed into smaller non-overlapping pieces $C_{ki}$, $i=1,\ldots ,l_k$, each over the partition
intervals of $\mathcal{Q}$ falling into $[t_{k-1},t_k]$. Since the range of $f$ over $C_k$ falls into an open half
plane bounded by a line through origin, the range of $f$ over each $C_{ki}$ falls into the same half plane. Therefore,
by Proposition \ref{P1},
 \[
 I^*_n(f,C_k)=\prod _{i=1}^{l_k}I^*_n(f,C_{ki})
 \]
with the same branch $\mathcal{L}_k$ of $\log $ used for all $I_0(f,C_k)$ and $I_0(f,C_{k1}),\ldots ,I_0(f,C_{kl_k})$.
Then
 \[
 I^*_n(f,C_k)=\prod _{k=1}^{m}I^*_n(f,C_k))=\prod _{k=1}^{m}\prod _{i=1}^{l_k}I^*_n(f,C_{ki}),
 \]
i.e., both $\mathcal{P}$ and $\mathcal{Q}$ produce the same multiple values. In case if $\mathcal{P}$ and $\mathcal{Q}$
are two arbitrary partitions of $[a,b]$ from Lemma \ref{L1}, we can compare the integral for selections $\mathcal{P}$
and $\mathcal{Q}$ with the same for their refinement $\mathcal{P}\cup \mathcal{Q}$ and deduce that $I^*_n(f,C)$ is
independent on selection of $\mathcal{P}$ and $\mathcal{Q}$. Actually the described method can be seen as a kind of {\em gluing} method, where we match the values for the partitions  $\mathcal P$ and $\mathcal Q$. 


 \section{Properties of Complex Multiplicative Integrals}

 \begin{theorem}[\textit{1st multiplicative property}]
 \label{T2}
Let $f$ be a nowhere-vanishing continuous function, defined on an open connected set $D$, and let $C=\{
z(t)=x(t)+iy(t):a\le t\le b\} $ be a piecewise smooth simple curve in $D$. Take any $a<c<b$ and let $C_1=\{
z(t)=x(t)+iy(t):a\le t\le c\} $ and $C_2=\{ z(t)=x(t)+iy(t):c\le t\le b\} $. Then
 \[
 \int _Cf(z)^{dz}=\int _{C_1}f(z)^{dz}\int _{C_2}f(z)^{dz}
 \]
in the sense that $I^*_n(f,C)=I^*_n(f,C_1)I^*_n(f,C_2)$ for all $n=0,\pm 1,\pm 2,\ldots $, where
$\mathcal{L}_{01}(f(z(a)))=\mathcal{L}_{11}(f(z(a)))$ and $\mathcal{L}_{1m_1}(f(z(c)))=\mathcal{L}_{21}(f(z(c)))$ if
$\mathcal{L}_{01},\ldots ,\mathcal{L}_{0m}$, $\mathcal{L}_{11},\ldots ,\mathcal{L}_{1m_1}$ and $\mathcal{L}_{21},\ldots
,\mathcal{L}_{2m_2}$ are the sequences of branches of $\log $ used in definition of $I^*_0(f,C)$, $I^*_0(f,C_1)$ and
$I^*_0(f,C_2)$, respectively.
 \end{theorem}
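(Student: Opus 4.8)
The plan is to reduce the general statement to the partition independence of the definition (\ref{14}) already established in Section 4, combined with a uniqueness principle for the continuous branch selections, and then to split the defining product at the node $c$.

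First I would recall that, by construction, the branches $\mathcal{L}_{0k}$ used to compute $I^*_0(f,C)$ assemble into a single continuous function $g_0$ on $[a,b]$ whose value at each $t$ is one of the branch values of $\log f(z(t))$; likewise the $\mathcal{L}_{1k}$ and $\mathcal{L}_{2k}$ assemble into continuous functions $g_1$ on $[a,c]$ and $g_2$ on $[c,b]$. The key observation is a uniqueness principle: if two such continuous branch-valued functions agree at a single point of a connected interval, then they agree throughout. Indeed, if $e^{u(t)}=e^{v(t)}=f(z(t))$ with $u,v$ continuous, then $u-v$ is continuous and takes values in the discrete set $2\pi i\mathbb{Z}$, hence is constant, and that constant is $0$ once $u$ and $v$ coincide somewhere.

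Next I would apply this principle using the two matching hypotheses. The condition $\mathcal{L}_{01}(f(z(a)))=\mathcal{L}_{11}(f(z(a)))$ says $g_0(a)=g_1(a)$, so $g_1=g_0|_{[a,c]}$. The condition $\mathcal{L}_{1m_1}(f(z(c)))=\mathcal{L}_{21}(f(z(c)))$ says $g_1(c)=g_2(c)$; combined with $g_1(c)=g_0(c)$, this forces $g_2(c)=g_0(c)$, whence $g_2=g_0|_{[c,b]}$. Thus $g_1$ and $g_2$ are precisely the restrictions of the single branch selection $g_0$ to the two subcurves, and the apparent freedom in the choice of branch sequences has been pinned down.

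Finally, invoking the partition independence from Section 4, I would recompute all three integrals using one partition of $[a,b]$ that contains $c$ as a node, say $c=t_j$. In the product (\ref{14}) for $I^*_n(f,C)$ I would split the exponent sum $\sum_{k=1}^m$ into $\sum_{k=1}^j$ and $\sum_{k=j+1}^m$, and split the winding factor via $z(b)-z(a)=(z(c)-z(a))+(z(b)-z(c))$. Because $g_1=g_0|_{[a,c]}$ and $g_2=g_0|_{[c,b]}$, the first group of factors is exactly $I^*_n(f,C_1)$ and the second exactly $I^*_n(f,C_2)$, giving $I^*_n(f,C)=I^*_n(f,C_1)I^*_n(f,C_2)$ for every $n$. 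The main obstacle is purely the bookkeeping of branches, namely making precise that the matching hypotheses force the three branch selections to be mutually consistent; this is exactly what the uniqueness principle delivers, and once it is in place the factorization is immediate from the definition.
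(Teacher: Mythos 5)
Your proposal is correct and follows essentially the same route as the paper, which simply appeals to the definition (\ref{14}) and its independence of the choice of partition from Lemma \ref{L1}, then implicitly splits the product at the node $c$. The only difference is that you spell out the branch-consistency step --- the uniqueness principle that two continuous selections of $\log f(z(t))$ agreeing at one point of a connected interval agree everywhere --- which the paper leaves tacit; this is a welcome precision but not a different argument.
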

 \begin{proof}
This follows from the the way of definition of complex *integral for general interval $[a,b]$ and its independence on
the selection of partition $\mathcal{P}$ from Lemma \ref{L1}.
 \end{proof}
 \begin{theorem}[\textit{2nd multiplicative property}]
 \label{T3}
Let $f$ and $g$ be nowhere-vanishing continuous functions, defined on an open connected set $D$, and let $C=\{
z(t)=x(t)+iy(t):a\le t\le b\} $ be a piecewise smooth simple curve in $D$. Then
 \[
 \int _C(f(z)g(z))^{dz}=\int _Cf(z)^{dz}\int _Cg(z)^{dz}
 \]
as a set equality, where the product of the sets $A$ and $B$ is treated as $AB=\{ ab: a\in A,\ b\in B\} $.
 \end{theorem}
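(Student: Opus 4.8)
The plan is to reduce the identity to the additivity of the complex logarithm inside the exponent, and then to track how the multi-valued (set) structure propagates. Recall from (\ref{14}) that a branch value $I^*_0(f,C)$ is obtained by gluing the local branches $\mathcal{L}_1,\ldots,\mathcal{L}_m$ into a single continuous function $\phi$ on $[a,b]$ with $e^{\phi(t)}=f(z(t))$, so that $I^*_0(f,C)=e^{\int_a^b\phi(t)z'(t)\,dt}$, and the whole integral is the set $\int_Cf(z)^{dz}=\{e^{2\pi n w i}I^*_0(f,C):n\in\mathbb{Z}\}$ with $w=z(b)-z(a)$. Since any two continuous branch functions of $\log f$ along $C$ differ by a constant in $2\pi i\mathbb{Z}$, this set is exactly $\{e^{\int_a^b\phi(t)z'(t)\,dt}:\phi\text{ a continuous branch function of }\log f\}$. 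First I would record this reformulation for $f$, for $g$, and for $fg$.

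To set the stage I would apply Lemma \ref{L1} separately to $f$, $g$ and $fg$ and take a common refinement of the three resulting partitions; by the partition-independence established in Section 4 (which rests on Proposition \ref{P1}) this single partition $\mathcal{P}=\{t_0,\ldots,t_m\}$ may be used to compute all three integrals. On each piece $C_k$ localize $f$ and $g$, producing continuous branch functions $\phi$ of $\log f$ and $\psi$ of $\log g$ along $C$. The algebraic heart is the observation that $\phi+\psi$ is then a continuous branch function of $\log(fg)$, because $e^{\phi(t)+\psi(t)}=e^{\phi(t)}e^{\psi(t)}=f(z(t))g(z(t))$. Additivity of the integral in the exponent then yields
\[
e^{\int_a^b(\phi+\psi)z'\,dt}=e^{\int_a^b\phi z'\,dt}\,e^{\int_a^b\psi z'\,dt},
\]
so the corresponding branch values satisfy $I^*_0(fg,C)=I^*_0(f,C)\,I^*_0(g,C)$.

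It remains to handle the set equality. Writing $\lambda=e^{2\pi w i}$, the product set is
\[
\Big(\int_Cf(z)^{dz}\Big)\Big(\int_Cg(z)^{dz}\Big)=\{\lambda^{n+m}I^*_0(f,C)I^*_0(g,C):n,m\in\mathbb{Z}\}.
\]
Since $(n,m)\mapsto n+m$ maps $\mathbb{Z}^2$ onto $\mathbb{Z}$, and $I^*_0(f,C)I^*_0(g,C)=I^*_0(fg,C)$, this set equals $\{\lambda^k I^*_0(fg,C):k\in\mathbb{Z}\}=\int_C(f(z)g(z))^{dz}$, which is the claimed identity. Equivalently, in branch-function language, every continuous branch function $\chi$ of $\log(fg)$ can be written as $\chi=\phi+\psi$ for suitable branch functions $\phi,\psi$ of $\log f,\log g$ (absorb the ambient $2\pi i\mathbb{Z}$ ambiguity into $\phi$), so the two inclusions between the product set and $\int_C(f(z)g(z))^{dz}$ both hold.

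The step I expect to be the main obstacle is precisely this last bookkeeping: verifying that as $\phi$ and $\psi$ range over \emph{all} branch functions of $\log f$ and $\log g$, the sum $\phi+\psi$ ranges over all branch functions of $\log(fg)$, i.e.\ the surjectivity of $(n,m)\mapsto n+m$. This is exactly why the statement must be read as the set equality $AB=\{ab\}$ and cannot hold as a single-valued identity; the accompanying technical point is ensuring, via the freedom in the starting branch together with partition-independence, that the branches chosen for $f$ and $g$ may be taken so that their sum is admissible as the branch data for $fg$.
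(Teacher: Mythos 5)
Your proof is correct and follows the same route as the paper: the paper's own proof is a one-line appeal to (\ref{14}) together with the set identity $\log(z_1z_2)=\log z_1+\log z_2$, and your argument is exactly that reduction carried out in detail (common refined partition, additivity of branch functions in the exponent, and the surjectivity of $(n,m)\mapsto n+m$ to get the set equality). The extra bookkeeping you supply is sound and fills in what the paper leaves implicit.
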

 \begin{proof}
This follows from (\ref{14}) and the set equality $\log (z_1z_2)=\log z_1+\log z_2$, where the sum of the sets $A$ and
$B$ is treated as $A+B=\{ a+b: a\in A,\ b\in B\} $.
 \end{proof}
 \begin{theorem}[\textit{Division property}]
 \label{T4}
Let $f$ and $g$ be nowhere-vanishing continuous functions, defined on an open connected set $D$, and let $C=\{
z(t)=x(t)+iy(t):a\le t\le b\} $ be a piecewise smooth simple curve in $D$. Then
 \[
 \int _C(f(z)/g(z))^{dz}=\int _Cf(z)^{dz}\big/ \int _Cg(z)^{dz}
 \]
as a set equality, where the ratio of the sets $A$ and $B$ is treated as $A/B=\{ a/b: a\in A,\ b\in B\} $.
 \end{theorem}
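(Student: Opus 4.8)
The plan is to mirror the proof of the 2nd multiplicative property (Theorem~\ref{T3}), replacing the additive logarithm identity for a product by its counterpart for a quotient, $\log(z_1/z_2)=\log z_1-\log z_2$, understood as the set difference $A-B=\{a-b:a\in A,\ b\in B\}$. Everything is driven by the general definition~(\ref{14}), which writes each branch value as the exponential of a contour integral of a continuous single-valued selection of $\log$ along $C$.

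First I would fix, as in Section~4, continuous single-valued selections $\lambda^f$ and $\lambda^g$ of $\log f(z(t))$ and $\log g(z(t))$ on $[a,b]$, coming from compatible branch sequences. The crucial pointwise fact is that $\lambda^{f/g}:=\lambda^f-\lambda^g$ is again a continuous selection, now of $\log\big(f(z(t))/g(z(t))\big)$: from $e^{\lambda^f(t)}=f(z(t))$ and $e^{\lambda^g(t)}=g(z(t))$ one gets $e^{\lambda^f(t)-\lambda^g(t)}=f(z(t))/g(z(t))$, so $\lambda^f(t)-\lambda^g(t)$ is one of the values of $\log(f/g)(z(t))$, and it is continuous as a difference of continuous functions. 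This is exactly the pointwise form of the quotient identity for $\log$.

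Next, by linearity of the contour integral over each piece $C_k$ (and hence over all of $C$), the exponents in (\ref{14}) for $f/g$ split as $\sum_k\int_{C_k}\mathcal{L}^{f/g}_k\,dz=\sum_k\int_{C_k}\mathcal{L}^{f}_k\,dz-\sum_k\int_{C_k}\mathcal{L}^{g}_k\,dz$. Exponentiating yields the base-value relation $I^*_0(f/g,C)=I^*_0(f,C)\big/I^*_0(g,C)$, and reinstating the factor $e^{2\pi n(z(b)-z(a))i}$ gives $I^*_n(f/g,C)=e^{2\pi n(z(b)-z(a))i}\,I^*_0(f,C)\big/I^*_0(g,C)$ for every integer $n$. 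Here I would use that the partition from Lemma~\ref{L1} may be taken common to $f$, $g$ and $f/g$ by passing to a joint refinement, so all three integrals are computed over the same pieces.

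The last and only delicate step is the set-theoretic bookkeeping. The left-hand side is indexed by a single integer $n$, while the right-hand side $\int_Cf(z)^{dz}\big/\int_Cg(z)^{dz}$ is the collection of all ratios $I^*_p(f,C)\big/I^*_q(g,C)=e^{2\pi(p-q)(z(b)-z(a))i}\,I^*_0(f,C)\big/I^*_0(g,C)$ with $p$ and $q$ ranging independently. Since $\{p-q:p,q\in\mathbb{Z}\}=\mathbb{Z}$, setting $n=p-q$ shows the two sets coincide, with neither gaining nor losing values. I expect this collapse of the doubly-indexed quotient set onto the singly-indexed integral values to be the main obstacle to state cleanly; the remainder is the linearity and exponential algebra already packaged in (\ref{14}).
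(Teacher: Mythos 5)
Your proposal is correct and follows essentially the same route as the paper, whose entire proof is the one-line observation that the claim follows from (\ref{14}) together with the set identity $\log(z_1/z_2)=\log z_1-\log z_2$; you have simply filled in the details (continuous branch selections, linearity of the contour integral, and the collapse of the doubly-indexed quotient set via $\{p-q:p,q\in\mathbb{Z}\}=\mathbb{Z}$) that the authors leave implicit.
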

 \begin{proof}
This follows from (\ref{14}) and the set equality  $\log (z_1/z_2)=\log z_1-\log z_2$, where the difference of the sets
$A$ and $B$ is treated as $A-B=\{ a-b: a\in A,\ b\in B\} $.
 \end{proof}
 \begin{theorem}[\textit{Reversing the curve}]
 \label{T5}
Let $f$ be a nowhere-vanishing continuous functions, defined on an open connected set $D$, let $C=\{
z(t)=x(t)+iy(t):a\le t\le b\} $ be a piecewise smooth simple curve in $D$ and let $-C$ be the curve $C$ with opposite
orientation. Then
 \[
 \int _Cf(z)^{dz}=\Big( \int _{-C}f(z)^{dz}\Big) ^{-1}
 \]
in the sense that $I^*_n(f,C)=I^*_n(f,-C)^{-1}$ for all $n=0,\pm 1,\pm 2,\ldots \,$, where
$\mathcal{L}_{11}(f(z(a)))=\mathcal{L}_{2m_2}(f(z(a)))$ or $\mathcal{L}_{1m_1}(f(z(b)))=\mathcal{L}_{21}(f(z(b)))$ if
$\mathcal{L}_{11},\ldots ,\mathcal{L}_{1m_1}$ and $\mathcal{L}_{21},\ldots ,\mathcal{L}_{2m_2}$ are the sequences of
branches of $\log $ used in definition of $I^*_0(f,C)$ and $I^*_0(f,-C)$, respectively.
 \end{theorem}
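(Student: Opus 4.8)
The plan is to work directly from the definition (\ref{14}), which expresses each branch value $I^*_n(f,C)$ as the product of the purely combinatorial factor $e^{2\pi n(z(b)-z(a))i}$ and the branch value $I^*_0(f,C)=e^{\sum_{k=1}^m\int_{C_k}\mathcal{L}_k(f(z))\,dz}$. The first factor behaves trivially under reversal: if $-C$ is traced by $w(\tau)=z(a+b-\tau)$, $a\le\tau\le b$, then its endpoints are $w(a)=z(b)$ and $w(b)=z(a)$, so the corresponding factor for $-C$ is $e^{2\pi n(z(a)-z(b))i}=e^{-2\pi n(z(b)-z(a))i}$, which is exactly the reciprocal of the factor for $C$. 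Thus the whole theorem reduces to proving the single-branch identity $I^*_0(f,-C)=I^*_0(f,C)^{-1}$, after which multiplying the two reciprocal factors gives $I^*_n(f,C)=I^*_n(f,-C)^{-1}$ for every $n$.

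Next I would express the exponent of $I^*_0(f,C)$ through the continuous single-valued selection $g$ of $\log f$ along the curve that was constructed in Section 4: on each $[t_{k-1},t_k]$ one has $\mathcal{L}_k(f(z(t)))=g(t)$, so that $\sum_{k=1}^m\int_{C_k}\mathcal{L}_k(f(z))\,dz=\int_a^b g(t)\,z'(t)\,dt$. Running the same gluing construction along $-C$ yields a continuous selection $\tilde g$ of $\log f(w(\tau))$. Since $f(w(\tau))=f(z(a+b-\tau))$, both $\tilde g(\tau)$ and $g(a+b-\tau)$ are continuous selections of $\log f$ over the same set of points, so their difference is a continuous $2\pi i\mathbb{Z}$-valued function, hence a constant $2\pi c i$ with $c$ an integer. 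This is the heart of the matter: the branch-matching hypothesis---either $\mathcal{L}_{1m_1}(f(z(b)))=\mathcal{L}_{21}(f(z(b)))$, i.e.\ $g(b)=\tilde g(a)$, or the analogous condition at $z(a)$---forces $c=0$, so that $\tilde g(\tau)=g(a+b-\tau)$ identically. Because the difference is a single global constant, either endpoint condition already implies the other, which is exactly why the statement is allowed to read ``or''.

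With $\tilde g(\tau)=g(a+b-\tau)$ and $w'(\tau)=-z'(a+b-\tau)$, I would compute $I^*_0(f,-C)=e^{\int_a^b \tilde g(\tau)\,w'(\tau)\,d\tau}$ and substitute $s=a+b-\tau$. The two sign changes, one from $w'$ and one from $ds=-d\tau$, together with the reversed limits of integration convert $\int_a^b \tilde g(\tau)\,w'(\tau)\,d\tau$ into $-\int_a^b g(s)\,z'(s)\,ds$; equivalently this is just the orientation-reversal rule $\int_{-C}g\,dz=-\int_C g\,dz$ for complex line integrals applied to the common integrand $g$. Hence $I^*_0(f,-C)=e^{-\int_C g\,dz}=I^*_0(f,C)^{-1}$, and combining this with the inversion of the $2\pi n$ factor from the first paragraph completes the proof.

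I expect the only genuine difficulty to be the branch bookkeeping in the second paragraph: one must verify that the two continuous log selections really do differ by a single global constant in $2\pi i\mathbb{Z}$---which uses continuity, the nowhere-vanishing of $f$, and the discreteness of the fibers of $\exp$---and that the matching hypothesis pins this constant to zero. Everything else is either the trivial inversion of the $2\pi n$ factor or the standard orientation-reversal identity for line integrals.
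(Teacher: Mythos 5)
Your proposal is correct and follows the same route as the paper, whose entire proof is the single line ``This follows from (\ref{14})''; you have simply supplied the details that the authors leave implicit, namely the inversion of the $e^{2\pi n(z(b)-z(a))i}$ factor under endpoint swap and the sign reversal of $\int_C g\,dz$ for the glued continuous logarithm $g$. Your observation that the two continuous selections differ by a single global constant in $2\pi i\mathbb{Z}$ --- so that either endpoint matching condition implies the other, which is why the statement says ``or'' --- is exactly the branch bookkeeping the paper's one-line proof suppresses.
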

 \begin{proof}
This follows from (\ref{14}).
 \end{proof}
 \begin{theorem}[\textit{Raising to a natural power}]
 \label{T6}
Let $f$ be a nowhere-vanishing continuous functions, defined on an open connected set $D$ and let $C=\{
z(t)=x(t)+iy(t):a\le t\le b\} $ be a piecewise smooth simple curve in $D$. Then for $n=0,1,2,\ldots \,$,
 \[
 \Big( \int _Cf(z)^{dz}\Big) ^n\subseteq \int _C(f(z)^n)^{dz}.
 \]
 \end{theorem}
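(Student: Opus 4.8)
The plan is to argue directly from the explicit multiple values in formula (\ref{14}). Because the letter $n$ is reserved for the exponent in the statement, I would index the branch values of both *integrals by a fresh integer $j$. Thus I write the values of $\int_Cf(z)^{dz}$ as
\[
I^*_j(f,C)=e^{2\pi j(z(b)-z(a))i}\,V,\qquad V=e^{\sum_{k=1}^m\int_{C_k}\mathcal{L}_k(f(z))\,dz},
\]
where $V=I^*_0(f,C)$ is the branch value produced by a consistent sequence $\mathcal{L}_1,\dots,\mathcal{L}_m$ of branches of $\log$ over a partition from Lemma~\ref{L1}, so that $g(t):=\mathcal{L}_k(f(z(t)))$ for $t\in[t_{k-1},t_k]$ glues to a continuous determination of $\log f(z(t))$ on $[a,b]$. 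Raising a single value to the $n$-th power gives
\[
\big(I^*_j(f,C)\big)^n=e^{2\pi jn(z(b)-z(a))i}\,V^n,\qquad V^n=e^{\,n\sum_{k=1}^m\int_{C_k}\mathcal{L}_k(f(z))\,dz}.
\]
The target is then to recognize the right-hand side as one of the values $I^*_l(f^n,C)$.

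The heart of the matter is to exhibit $V^n$ as a branch value of $\int_C(f(z)^n)^{dz}$. The natural candidate for a continuous determination of $\log f^n$ along $C$ is $G(t):=n\,g(t)$, since $e^{n g(t)}=f(z(t))^n$ shows that $n g(t)$ is a value of $\log f^n(z(t))$, and $G$ is continuous because $g$ is. Here lies the one genuine obstacle: although each piece $C_k$ keeps $f$ inside an open half plane, the map $f^n$ multiplies arguments by $n$ and need not stay in any half plane over $C_k$, so $n\mathcal{L}_k$ is not automatically a branch composition for $f^n$. I would remove this by passing to a common refinement of the Lemma~\ref{L1} partitions for $f$ and for $f^n$; on each refined piece $C'_l$ the range of $f^n$ does lie in a half plane, and there the continuous function $n\,g$ agrees with $\mathcal{M}_l(f^n(z))$ for a suitably chosen branch $\mathcal{M}_l$ of $\log$ (the ambiguity of an additive $2\pi i\mathbb{Z}$ is fixed by continuity across the junction points). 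By the partition-independence of the general *integral established after (\ref{14}), the branch value of $\int_C(f^n)^{dz}$ computed from this determination equals
\[
e^{\sum_{l}\int_{C'_l}\mathcal{M}_l(f^n(z))\,dz}=e^{\,n\sum_{k}\int_{C_k}\mathcal{L}_k(f(z))\,dz}=V^n,
\]
so that $V^n=I^*_0(f^n,C)$ for this choice of starting branch.

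Combining the two computations, the shift $e^{2\pi jn(z(b)-z(a))i}$ is exactly the index-$l$ factor of (\ref{14}) with $l=jn\in\mathbb{Z}$, whence
\[
\big(I^*_j(f,C)\big)^n=e^{2\pi (jn)(z(b)-z(a))i}\,V^n=I^*_{jn}(f^n,C)\in\int_C(f(z)^n)^{dz}.
\]
Letting $j$ range over $\mathbb{Z}$ yields the asserted inclusion. I would close by remarking why only $\subseteq$ can hold in general: the powers reach only those indices $l$ that are multiples of $n$, reflecting the pointwise set inclusion $n\log w\subseteq\log(w^n)$, which is strict because $n\log w$ forms a sublattice of spacing $2\pi n$ inside the spacing-$2\pi$ set $\log(w^n)$; the values $I^*_l(f^n,C)$ with $l$ not divisible by $n$ therefore lie in the right-hand set but not in the left. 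The only real work is thus the half-plane/partition matching of the middle paragraph; the rest is bookkeeping with (\ref{14}).
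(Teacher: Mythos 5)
Your proof is correct, but it takes a genuinely different route from the paper. The paper disposes of the theorem in one line: repeated application of Theorem~\ref{T3} gives $\int_C(f(z)^n)^{dz}=\int_Cf(z)^{dz}\cdots\int_Cf(z)^{dz}$ as an $n$-fold set product, and the inclusion is then the purely set-theoretic fact $A^n=\{a^n:a\in A\}\subseteq AA\cdots A$. You instead compute directly from (\ref{14}), and the half-plane/refinement argument in your middle paragraph is in effect a from-scratch re-derivation of the special case $g=f^{n-1}$ of Theorem~\ref{T3} (whose own proof the paper leaves at the level of ``follows from (\ref{14}) and $\log(z_1z_2)=\log z_1+\log z_2$''). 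What your longer route buys is a sharper conclusion: you identify the image of the power map exactly, $\big(I^*_j(f,C)\big)^n=I^*_{jn}(f^n,C)$, so the left-hand set is precisely the subfamily of values indexed by $n\mathbb{Z}$; the paper's argument only gives containment. One small caveat on your closing remark: the claim that the values $I^*_l(f^n,C)$ with $n\nmid l$ never lie in the left-hand set is not true in general, since when $z(b)-z(a)$ is an integer (or rational) the factors $e^{2\pi l(z(b)-z(a))i}$ collapse and the inclusion can degenerate to equality; the correct statement is that the inclusion \emph{may} be strict, as your lattice-spacing picture shows for generic $z(b)-z(a)$. This does not affect the validity of the inclusion you were asked to prove.
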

 \begin{proof}
This follows from multiple application of Theorem \ref{T3} and the fact that $A^n\subseteq AA\cdots A$ ($n$ times),
where the set $A^n$ is treated as $A^n=\{ a^n: a\in A\} $, but $AA\cdots A(n\ \text{times})=\{ a_1a_2\cdots a_n:a_i\in
A, i=1,\ldots ,n\} $.
 \end{proof}
 \begin{theorem}[\textit{Fundamental theorem of calculus for complex *integrals}]
 \label{T7}
Let $f$ be a nowhere-vanishing *holomorphic function, defined on an open connected set $D$, and let $C=\{
z(t)=x(t)+iy(t):a\le t\le b\} $ be a piecewise smooth simple curve in $D$. Then
 \begin{equation}
 \label{15}
 \int _Cf^*(z)^{dz}=\{ e^{2\pi n(z(b)-z(a))i}f(z(b))/f(z(a)),\ n=0,\pm 1,\pm 2,\ldots \} .
 \end{equation}
 \end{theorem}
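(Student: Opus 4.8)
The plan is to reduce (\ref{15}) to its local counterpart, Proposition \ref{P2}, via the partition furnished by Lemma \ref{L1}, and then to telescope the resulting product. First I would record that the integrand $f^*(z)=e^{f'(z)/f(z)}$ is nowhere-vanishing, since the exponential never vanishes, and continuous on $D$, since $f$ is *holomorphic and nowhere-vanishing so that $f'/f$ is continuous; hence $\int_C f^*(z)^{dz}$ is defined by the construction of Section 4. Applying Lemma \ref{L1} to $f$ (and refining, if necessary, so that the values of $f^*$ on each piece also lie in an open half plane) I obtain a partition $\{t_0,\ldots,t_m\}$ of $[a,b]$ for which every subcurve $C_k=\{z(t):t_{k-1}\le t\le t_k\}$ satisfies the hypotheses of Proposition \ref{P2}.

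On each such $C_k$, Proposition \ref{P2} gives
\[
\int_{C_k}f^*(z)^{dz}=\{e^{2\pi n(z(t_k)-z(t_{k-1}))i}f(z(t_k))/f(z(t_{k-1})):n=0,\pm1,\pm2,\ldots\}.
\]
Iterating the first multiplicative property (Theorem \ref{T2}) across the partition yields $I^*_n(f^*,C)=\prod_{k=1}^m I^*_n(f^*,C_k)$ for each fixed global index $n$, with the branches $\mathcal{L}_k$ glued so as to agree at the junctions $z(t_k)$. Granted this, the conclusion is pure telescoping: the factors $e^{2\pi n(z(t_k)-z(t_{k-1}))i}$ multiply to $e^{2\pi n(z(b)-z(a))i}$ because $\sum_{k=1}^m (z(t_k)-z(t_{k-1}))=z(b)-z(a)$, while the honest quotients $f(z(t_k))/f(z(t_{k-1}))$ collapse to $f(z(b))/f(z(a))$; letting $n$ run through $\mathbb{Z}$ then reproduces exactly the set on the right of (\ref{15}).

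The step I expect to be the main obstacle is the branch bookkeeping, because Proposition \ref{P2} is only a set equality: on $C_k$ the branch value attached to the glued branch $\mathcal{L}_k$ is a priori $I^*_0(f^*,C_k)=e^{2\pi n_k(z(t_k)-z(t_{k-1}))i}f(z(t_k))/f(z(t_{k-1}))$ for some integer $n_k$, and if these $n_k$ varied from piece to piece the exponential factors would not collapse to a single index. The resolution is that the gluing of Section 4 assembles the $\mathcal{L}_k$ into one continuous single-valued branch $g$ of $\log f^*$ along all of $C$, with $I^*_0(f^*,C)=e^{\int_C g(z)\,dz}$ by (\ref{12})--(\ref{14}). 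Since $g(t)-f'(z(t))/f(z(t))$ is continuous and takes values in $2\pi i\mathbb{Z}$, it is a single constant $2\pi i k_0$; thus the same integer $k_0$ controls every piece, forcing $n_k=k_0$ for all $k$ and making the telescoping legitimate. As a consistency check I would evaluate $\int_C g(z)\,dz=\int_C (f'(z)/f(z))\,dz+2\pi k_0(z(b)-z(a))i$, noting that $\int_C (f'(z)/f(z))\,dz=\log f(z(b))-\log f(z(a))$ along a continuous branch of $\log f$ on $C$; this gives $I^*_0(f^*,C)=e^{2\pi k_0(z(b)-z(a))i}f(z(b))/f(z(a))$ directly, and absorbing $k_0$ into $n$ recovers (\ref{15}).
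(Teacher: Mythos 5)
Your proposal follows essentially the same route as the paper: partition $C$ by Lemma \ref{L1}, apply the local fundamental theorem (Proposition \ref{P2}) on each subcurve $C_k$, and telescope the product coming from the decomposition (\ref{14}), with the exponential factors combining to $e^{2\pi n(z(b)-z(a))i}$ and the quotients collapsing to $f(z(b))/f(z(a))$. The extra branch bookkeeping you supply (showing the integer offset $k_0$ between the glued branch and $f'/f$ is constant along $C$ and can be absorbed into $n$) is a point the paper passes over silently, but it does not alter the approach.
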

 \begin{proof}
Let $\mathcal{P}=\{ t_0,t_1,\ldots ,t_m\} $ be a partition from Lemma \ref{L1} and let $C_k=\{ z(t):t_{k-1}\le t\le
t_k\} $. Then
 \[
 \int _Cf(z)^{dz}=\int _{C_1}f(z)^{dz}\cdots \int _{C_m}f(z)^{dz}.
 \]
Hence, by Proposition \ref{P2},
 \begin{align*}
 I^*_n(f,C)
  & =e^{2\pi n(z(t_1)-z(t_0))i+\cdots +2\pi  n(z(t_m)-z(t_{m-1}))i}\frac{f(z(t_1))\cdots f(z(t_m))}{f(z(t_0))\cdots  f(z(t_{m-1}))}\\
  & =e^{2\pi n(z(b)-z(a))i}\frac{f(z(b))}{f(z(a))}.
 \end{align*}
This proves the theorem.
 \end{proof}

This theorem demonstrates that $\int _Cf^*(z)^{dz}$ is independent on the shape of the piecewise smooth curve $C$, but
depends on its initial point $z(a)=x(a)+iy(a)$ and end point $z(b)=x(b)+iy(b)$ on the curve $C$. Therefore, this
integral can be denoted by
 \[
 \int _{z(a)}^{z(b)}f^*(z)^{dz}.
 \]
 \begin{corollary}
 \label{C1}
Let $f$ be a nowhere-vanishing *holomorphic function, defined on an open connected set $D$, and let $C=\{
z(t)=x(t)+iy(t):a\le t\le b\} $ be a piecewise smooth simple closed curve in $D$. Then
 \begin{equation}
 \label{16}
 \oint _Cf^*(z)^{dz}=1.
 \end{equation}
 \end{corollary}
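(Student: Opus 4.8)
The plan is to deduce this corollary directly from the fundamental theorem of calculus for complex *integrals (Theorem \ref{T7}), specializing that result to a closed curve. First I would observe that since $C$ is a simple \emph{closed} curve parametrized by $z(t)=x(t)+iy(t)$, $a\le t\le b$, its initial and terminal points coincide, i.e. $z(a)=z(b)$. This single fact is what drives the whole argument.

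Applying Theorem \ref{T7} to $f^*$ along $C$ gives
\[
\oint _Cf^*(z)^{dz}=\{ e^{2\pi n(z(b)-z(a))i}f(z(b))/f(z(a)):n=0,\pm 1,\pm 2,\ldots \} .
\]
Substituting $z(b)=z(a)$, the exponent $2\pi n(z(b)-z(a))i$ vanishes for every $n$, so that $e^{2\pi n(z(b)-z(a))i}=1$ for all $n$, and simultaneously the ratio $f(z(b))/f(z(a))=1$ because the two arguments agree. Hence every member of the indexed set equals $1$, the multi-valued integral collapses to the single value $\{ 1\} $, and this is precisely (\ref{16}).

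The only genuinely conceptual point — and thus the nearest thing to an obstacle — is recognizing that the multi-valuedness of the complex *integral, encoded exactly in the factor $e^{2\pi n(z(b)-z(a))i}$ that normally produces countably many distinct values, is extinguished for closed curves: the winding-type term $z(b)-z(a)$ is forced to be zero. Beyond this, nothing of substance arises, since the corollary is a degenerate case of Theorem \ref{T7}. I would, however, briefly check that the hypotheses of Theorem \ref{T7} are met, namely that $f^*$ inherits continuity and the nowhere-vanishing property from the *holomorphy of $f$ on $D$; this is automatic, as $f^*(z)=e^{f'(z)/f(z)}$ never vanishes wherever it is defined.
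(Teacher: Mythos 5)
Your proposal is correct and follows exactly the paper's route: the paper's proof of Corollary \ref{C1} is simply to set $z(a)=z(b)$ in (\ref{15}), which is precisely your argument. Your added remarks on the collapse of multi-valuedness and on the hypotheses of Theorem \ref{T7} are consistent with the paper's own observation that $\oint _Cf^*(z)^{dz}$ becomes single-valued.
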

 \begin{proof}
Simply, write $z(a)=z(b)$ in (\ref{15}).
 \end{proof}

Note that in (\ref{16}) all the values of $\oint _Cf^*(z)^{dz}$ are equal to 1, i.e., $\oint _Cf^*(z)^{dz}$ becomes
single-valued.
 \begin{example}
 \label{E5}
{\rm By Example \ref{E1}, the function $f(z)=e^{cz}$, $z\in \mathbb{C}$, where $c=\mathrm{const}\in \mathbb{C}$, has the
*derivative $f^*(z)=e^c$. Respectively,
 \[
 \int _C(e^c)^{dz}=e^{2\pi n(z(b)-z(a))i}e^{c(z(b)-z(a))}=e^{(z(b)-z(a))(c+2\pi ni)},
 \]
where $C=\{ z(t):a\le t\le b\} $ is a piecewise smooth curve. }
 \end{example}
 \begin{example}
 \label{E6}
{\rm By Example \ref{E2}, the function $f(z)=e^{ce^z}$, $z\in \mathbb{C}$, where $c=\mathrm{const}\in \mathbb{C}$, has
the
*derivative $f^*(z)=f(z)$. Respectively,
 \[
 \int _C\big( e^{ce^z}\big) ^{dz}=e^{2\pi n(z(b)-z(a))i}e^{c\left( e^{z(b)}-e^{z(a)}\right) },
 \]
where again $C=\{ z(t):a\le t\le b\} $ is a piecewise smooth curve.
 }
 \end{example}
 \begin{example}
 \label{E66}
{\rm The analog of the integral
 \[
 \oint _{|z|=1}\frac{dz}{z}=2\pi i
 \]
in complex *calculus is
 \[
 \oint _{|z|=1}\big( e^{1/z}\big) ^{dz}.
 \]
Assuming that the orientation on the unit circle $|z|=1$ is positive, we informally have
 \[
 \oint _{|z|=1}\big( e^{1/z}\big) ^{dz}=e^{\oint _{|z|=1}\log e^{1/z}dz}=e^{\oint _{|z|=1}\big( \frac{1}{z}+2\pi ni\big)dz}
 =e^{2\pi n(z(b)-z(a))i}e^{2\pi i}=1.
 \]
Formally, we use Example \ref{E3} and calculate the same:
 \[
 \oint _{|z|=1}\big( e^{1/z}\big) ^{dz}=e^{2\pi n(z(b)-z(a))i}\frac{z(b)}{z(a)}=\frac{e^{\pi i}}{e^{-\pi i}}=e^{2\pi i}=1.
 \]
Thus, this example also fits to Corollary \ref{C1}. The main idea of this is that $e^0=e^{2\pi i}=1$ though $2\pi
i\not= 0$. In other words, the discontinuity of the branches of $\log $ on $(-\infty ,0]$, that creates the Cauchy formula $\int
_{|z|=1}\frac{dz}{z}=2\pi i$, appears in a smooth form in complex *calculus because now $\log z$ is replaced by
$e^{\log z}=z$, where the discontinuity of $\log $ is compensated by periodicity of the exponential function.
 }
 \end{example}

\section{Conclusion}

In continuation of \cite{B4}, evidently, the extension of multiplicative calculus to complex valued functions of complex variable eliminates the restriction to positive valued functions caused by multiplicative calculus. The complex multiplicative integral was defined firstly using the "method of localization"  in the sense of \cite{S} so that the values of the function are restricted to one half-plane bounded by a line through the origin, so we can decompose the restriction of $\log f$ on $C$ into the branches of the complex logarithm and the restriction of $f$ to $C$. The general definition of the complex multiplicative integral removes the restriction by  the so-called {\em{gluing} }method by matching the values of the functions at the branch-cuts. Finally, based on this definition the properties of complex multiplicative integrals are given and illustrated by application to certain standard examples.  


 \end{document}